\documentclass[12pt]{amsart}

\usepackage{amsmath,amsthm,amssymb}

\usepackage{fancyvrb }

\usepackage[margin=1in]{geometry}

\usepackage[hidelinks]{hyperref}

\usepackage{verbatim}

\usepackage{cases}

\usepackage{color}

\usepackage{todonotes}

\newtheorem{theorem}{Theorem}[section]

\newtheorem{proposition}[theorem]{Proposition}

\newtheorem{corollary}[theorem]{Corollary}

\newtheorem{lemma}[theorem]{Lemma}
\input xy
\xyoption{all}

\theoremstyle{definition}

\newtheorem{remark}[theorem]{Remark}

\DeclareMathOperator{\rank}{rank}

\DeclareMathOperator{\chr}{char}

\DeclareMathOperator{\Ext}{Ext}

\DeclareMathOperator{\opH}{H}
\DeclareMathOperator{\Hom}{Hom}

\newcommand{\fraku}{\mathfrak{u}}

\newcommand{\la}{\lambda}

\begin{document}

\title[Linear bounds for cohomology of algebraic groups]{Linear bounds for cohomology of algebraic groups}

\begin{abstract}
In this note, we establish an explicit upper bound for the dimension of the rational cohomology for a simple algebraic group over an algebraically closed field of prime characteristic.
\end{abstract}

\date{\today}

\author{\sc Christopher P. Bendel}
\address
{Department of Mathematics, Statistics and Computer Science\\
University of
Wisconsin-Stout Polytechnic\\
Menomonie\\ WI~54751, USA}
\thanks{Research of each author was supported in part by an AMS-Simons Research Enhancement Grant for PUI Faculty}
\email{bendelc@uwstout.edu}

\author{\sc Nham Ngo}
\address
{Department of Mathematics\\
University of North Georgia \\
Oakwood\\ GA~30566, USA}
\email{Nham.Ngo@ung.edu}

\maketitle

\section{Introduction}

This note builds on a long history of investigations into the dimension of the cohomology of a finite or algebraic group over an algebraically closed field $k$ of prime characteristic $p$.  Let $G$ be a finite group, $kG$ be its group algebra, and $M$ be a finite-dimensional $kG$-module.  A longstanding representation theory challenge has been to ascertain the dimension of a cohomology group $\opH^n(G,M)$ for a nonnegative integer $n$.  In particular, in the 1980s, Guralnick conjectured the existence of a universal bound on $\opH^1(G,M)$ for an absolutely irreducible  $kG$-module $M$, with further speculation that such a bound should be quite small.  That initial question later spawned investigations for higher degrees and more general modules, as well as to the setting of reductive (and related) algebraic groups.   In the algebraic group context, the early 2000s saw a number of results in this area as well as connections back to finite groups of Lie type.  A key event in this story was a workshop held at the American Institute of Mathematics (AIM) in the summer of 2012 out of which came strong evidence that Guralnick’s initial conjecture of a universal bound was unlikely to exist and signaled the need for broader approaches to the cohomological bounding question. For a more thorough summary of the history of the subject and highlights of that AIM meeting, the reader is directed to \cite{Ben}.   

In this work, we consider the question of bounding the dimension of $\opH^n(G,M)$ for a simple algebraic group (scheme) $G$ over $k$ and finite-dimensional rational $G$-module $M$.  Parshall and Scott \cite{PS} showed that for any {\em irreducible} $G$-module $M$, there exists a number $C(\Phi,n)$ depending only on the root system $\Phi$ and the cohomological degree $n$ such that \[ \dim\opH^n(G,M)\le C(\Phi,n).  \] The impressive work of Parshall and Scott uses deep representation-theoretic ideas and connections with quantum groups (including Kazhdan-Lusztig polynomials and the validity of the Luzstig Character Formula) to show the existence of such a bound more generally for extensions between simple $G$-modules, although no explicit bound is given.   In the case $n = 1$, Cline, Parshall, and Scott \cite{CPS09} (and reproved in \cite{PS}) did give an explicit bound depending on $p$ and Kostant's partition function.  Again in degree 1, Parker and Stewart \cite{PaSt} gave the first explict bound on the dimension of $\opH^1(G,M)$ for irreducible $M$ that depends only on the Coxter number of the group.

For an arbitrary (finite-dimensional) module $M$, a potentially more tractable task is to find a bound in terms of the dimension of $M$, paralleling results for finite groups from work of Guralnick et. al. \cite{GKKL}.  For example, in \cite{Ben}, it was observed that $\dim\opH^1(G,M) \leq \frac12\dim M$ and $\dim\opH^2(G,M) \leq \dim M$ (for any prime $p$).    In general, the “linear boundedness” question asks whether there exists a constant $c(n)$ such that $\dim\opH^n(G,M) \leq c(n)\cdot\dim M$ for any $M$, preferably with $c(n)$ independent of the prime $p$.   While the aforementioned degree 1 and 2 results are independent of the root system, the results of Parshall and Scott (and other work in that vein) suggest that $c(n)$ may need to depend on the root system for larger $n$.  Indeed, it follows from \cite{PS} that there exists a number $c(\Phi,n)$ such that 
\[\dim\opH^n(G,M)\le c(\Phi,n)\dim(M).\]

The goal of this work is to provide an explicit value for such a $c(\Phi,n)$ using much more elementary techniques (see Corollary \ref{C:main}).  Our bound is independent of the prime $p$, although a better bound can be obtained for $p > 3$.  The approach taken here is to consider a Borel subgroup $B$ of $G$ and a one-dimensional $B$-module associated to a dominant weight $\la$.  We identify a number $c(\Phi,n)$ such that
\[\dim\opH^n(B,\la)\le c(\Phi,n)\]
for any dominant weight $\la$ (see Theorems \ref{T:p>2} and \ref{T:p=2}). The proof uses combinatorial methods based on a well-known spectral sequence.  

Beyond the inherent interest in cohomology for algebraic groups, relationships between the cohomology of $G$ and the associated finite group $G({\mathbb F}_{p^r})$ of Lie type allow one to use bounds on $G$-cohomology to obtain bounds on $G({\mathbb F}_{p^r})$-cohomology. See for example \cite{CPS09}, \cite{PS}, \cite{Ben}, \cite{PaSt}, and \cite{BNPPSS}.


\section{Some preliminaries}\label{notation}

In the remainder, $G$ is a simple algebraic group defined over an algebraically closed field $k$ with $\chr(k)=p>0$. We use standard notation as in Jantzen's book \cite{Jan:2003}. For the convenience of the reader, we list necessary notation and conventions as follows.

\begin{itemize}
\item $B$: a Borel subgroup of $G$ corresponding to negative roots
\item $T$: a maximal torus in $B$
\item $U$: the unipotent radical of $B$ 
\item $X(T)$: the weight lattice of $T$
\item $X^{+} \subset X(T)$: the dominant weights
\item For a given positive integer $r$, let $F_r:G\to G$ be the $r$-th Frobenius morphism, see for example \cite[I.9]{Jan:2003}. The scheme-theoretic kernel $G_r=\ker(F_r)$ is called the $r$-th Frobenius kernel of $G$. 
\item Given a closed subgroup $H$ of $G$, denote $H_r$ the scheme-theoretic intersection $H\cap G_r$. 
\item $h_G$: Coxeter number of $G$
\item $\ell=\rank(G)$
\item $\Phi^+$: the set of positive roots. Similarly, $\Phi^-$ is the set of negative roots.
\item $\Phi=\Phi^+\cup\Phi^-$
\item $\Pi=\{\alpha_1,\ldots,\alpha_{\ell} \}$: the set of simple roots
\item $\mathbb{N} = \{0, 1, 2, 3, \dots\}$: the set of natural numbers
\item Each weight $\lambda\in\mathbb Z\Phi$ can be written
\[\lambda=c_1\alpha_1+\cdots+c_{\ell}\alpha_{\ell}\]
with $c_i\in\mathbb Z$. 
\item $\textrm{ht}(\lambda)=\sum_{i=1}^{\ell}c_i$: the height of $\lambda$, for $\la \in \mathbb{Z}\Phi$
\item $M^*=\Hom_k(M,k)$: the dual $G$-module of a given $G$-module $M$
\item $S^\bullet=S^\bullet(\fraku^*)$: Symmetric algebra over the dual space of $\fraku$ (the Lie algebra of $U$)
\item $\Lambda^\bullet=\Lambda^\bullet(\fraku^*)$: Exterior algebra over the dual space of $\fraku$
\end{itemize}

For a $G$-module $M$, we write $M^{(r)}$ for the module obtained by twisting the structure map for $M$ by $F_r$. Note that $G_r$ acts trivially on $M^{(r)}$.  Throughout the paper, tensor products will be taken over the field $k$ unless otherwise stated. 
Every $\la \in X(T)$ defines a one-dimensional $B$-module, which we also simply denote by $\la$.


For each $G$-module $M$, we define $\Ext^i_G(M,-)$ to be the $i$th right derived functor of $\Hom_G(M,-)$, see \cite[I.4.2]{Jan:2003} for details. Then the $n$th degree cohomology space of $G$ with coefficients in $M$ is defined to be
\[ \opH^n(G,M)=\Ext^n_G(k,M). \]
Recall also (cf. \cite[Corollary II.4.7]{Jan:2003}) that for any $n\ge 0$, we have
\[ \opH^n(G,M)\cong\opH^n(B,M). \]

In this paper, the Fibonacci sequence is defined by
\[F(1)=1,\quad F(2)=1, \quad  F(n)=F(n-1)+F(n-2), \]
for $n\ge 3$. 

We recall a result from \cite{BCNP}, which will play a key step in our main proof later.

\begin{proposition}\label{BCNP}
For any positive integers $m, n$, the number of sequences $\{x_i\}$ of non-negative integers such that 
\[
\begin{cases}
\displaystyle{\sum_{i=0}^\infty x_ip^i=m,}\\
\displaystyle{\sum_{i=0}^\infty x_i\le n}
\end{cases}
\]
is bounded by $F(n)$ if $p>3$, and bounded by $F(2n-1)$ for $p=2$ or $3$. Alternatively, one may say the number of partitions of $m$ into at most $n$ parts in the set $\{1, p, p^2, p^3, \ldots\}$ is bounded by $F(2n-1)$, which is independent of $m$ and $p$.
\end{proposition}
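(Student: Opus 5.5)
Let $N_p(m,n)$ denote the number of sequences $(x_i)_{i\ge 0}$ of non-negative integers with $\sum_i x_i p^i = m$ and $\sum_i x_i \le n$. I will prove the bound by strong induction on $n$, with $m$ and $p$ arbitrary. The idea is to condition on the lowest digit $x_0$. Since $\sum x_ip^i=m$, we have $x_0\equiv m \pmod p$, so $x_0\in\{r, r+p, r+2p,\dots\}$ with $0\le r<p$ the residue of $m$. Once $x_0$ is fixed, the remaining sequence $(x_{i+1})$ is a representation of $(m-x_0)/p$ with at most $n-x_0$ parts. This gives the recursion
\[
N_p(m,n)=\sum_{\substack{x_0\equiv m \ (p)\\ 0\le x_0\le \min(n,m)}} N_p\!\left(\tfrac{m-x_0}{p},\, n-x_0\right),
\]
with $N_p(0,j)=1$ for all $j\ge 0$, $N_p(m,0)=0$ for $m>0$, and $N_p(m,j)$ nondecreasing in $j$. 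Write $G(n)$ for the target bound, so $G(n)=F(n)$ when $p>3$ and $G(n)=F(2n-1)$ when $p\in\{2,3\}$. Because $G$ is nondecreasing, each term of the recursion is bounded by $G$ evaluated at the number of parts still available.

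\emph{Case $p>3$.} The admissible values of $x_0$ are $r, r+p, r+2p,\dots$, so by induction
\[
N_p(m,n)\le G(n-r)+G(n-r-p)+G(n-r-2p)+\cdots,
\]
where any term with negative argument is zero. Since $p\ge 5$, consecutive arguments differ by at least $5$. I would then bound the tail $\sum_{k\ge1}F(n-r-kp)$ by $F(n-r-1)$. This follows from the identity $\sum_{j\le N}F(j)=F(N+2)-1$ together with the sparsity of the summands: a sum of Fibonacci numbers whose indices are spaced at least $2$ apart is less than the next Fibonacci number. For $r\ge 1$ this gives $F(n-r)+F(n-r-1)=F(n-r+1)\le F(n)$. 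The case $r=0$ is the delicate one. There $x_0=0$ is allowed and contributes $N_p(m/p,n)$ with the part budget not reduced. I would handle this by first dividing $m$ by the largest power of $p$ dividing it, which does not change the count once the leading zero digits are removed. Then $r\ne0$ unless $m=0$, in which case the count is $1$.

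\emph{Case $p\in\{2,3\}$.} I run the same recursion, now comparing against $F(2n-1)$. The worst case is $p=2$, where $x_0$ ranges over every other integer. The sum becomes $\sum_k F(2(n-r-2k)-1)$, a sum of odd-indexed Fibonacci numbers spaced $4$ apart. This is bounded using $F(2a-1)+F(2a-3)+\cdots\le F(2a)$, applied to $a=n-r$, which fits inside $F(2n-1)$ whenever $r\ge1$. The $r=0$ issue is again removed by the normalization of stripping powers of $p$ from $m$. For $p=3$ the spacing is larger, so the same estimate applies.

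The final assertion about partitions into parts from $\{1,p,p^2,\dots\}$ is just a restatement: such a partition is the sequence $(x_i)$. Since $F(n)\le F(2n-1)$, the uniform bound $F(2n-1)$ holds for every $p$.

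The main obstacle is the $r=0$ bookkeeping, together with verifying that the sparse Fibonacci tails really fit under the claimed bound at small $n$. I expect the stripping-of-powers normalization to resolve the first point. If it is not enough, I would strengthen the induction hypothesis to a two-parameter bound depending on whether $p\mid m$.
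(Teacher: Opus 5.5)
Note first that the paper does not prove this proposition; it quotes it from \cite{BCNP}. Your argument therefore has to stand on its own. For residues $r\neq 0$ your recursion and the sparse-Fibonacci tail estimates are essentially fine. You do need the convention $G(0)\ge 1$, since $N_p(0,0)=1$ while $F(0)=0$.

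The genuine gap is the case $r=0$: the normalization you propose there is false. Dividing $m$ by a power of $p$ does change the count, because when $p\mid m$ the digit $x_0$ may be any multiple of $p$, not only $0$. For instance:
\begin{itemize}
\item for $p=5$ and $m=n=5$ there are two representations, $(x_0,x_1)=(0,1)$ and $(5,0)$, whereas $N_5(1,5)=1$;
\item similarly $N_2(2,2)=2$ but $N_2(1,2)=1$.
\end{itemize}
The correct identity for $p\mid m$, $m>0$, is $N_p(m,n)=N_p(m/p,n)+N_p(m-p,n-p)$. Its first term keeps the full budget $n$, so strong induction on $n$ cannot close. This is not a marginal case: for $p=2$ it is every even $m$. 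Moreover, your $r\neq 0$ step also invokes the hypothesis at multiples of $p$, so until this case is handled the induction does not close for any $m$.

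Your fallback, a two-parameter hypothesis recording only whether $p\mid m$, does not repair this either. Write $m=p^v u$ with $p\nmid u$ and iterate the identity:
\[
N_p(m,n)=N_p(u,n)+\sum_{j=0}^{v-1}N_p\bigl(m/p^j-p,\;n-p\bigr).
\]
Bounding each of the $v$ extra terms by $G(n-p)$ gives a bound that grows with $v=v_p(m)$, so it is not uniform in $m$. The case $v=1$ does close, via $N_p(u,n)=N_p(u-r',n-r')\le G(n-1)$ and $G(n-1)+G(n-p)\le G(n)$. The trouble is $v\ge 2$, where the $x_0=0$ branch lands again on a multiple of $p$ with the same budget.

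The missing idea is that most of the extra terms vanish. The number $m/p^j-p=p\,(p^{v-j-1}u-1)$ has at least $v-j-1$ base-$p$ digits equal to $p-1$. Any representation of a number uses at least its base-$p$ digit sum $s_p$ many parts. Hence the $j$-th term is zero unless $(v-j-1)(p-1)\le n-p$.

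Exploiting this quantitatively requires an induction that tracks digit-sum information about $m$, not just $n$ and $m \bmod p$. One convenient way to organize it is to parametrize representations by carry sequences $c_i\ge 0$ via $x_i=m_i+pc_i-c_{i-1}$, where $m_i$ are the base-$p$ digits of $m$ and $c_{-1}=0$. Then $\sum_i x_i=s_p(m)+(p-1)\sum_i c_i$, and the constraint $x_i\ge 0$ becomes $c_{i-1}\le m_i+pc_i$.
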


\begin{remark}
Note that it is rather surprising that upper bounds for the number of solutions to the system can be independent of $m$. This is crucial in our calculations as it helps establish bounds independent of weights. 
\end{remark}


 \section{Heights of Roots}
 
Before proving our main result in the next section, we first make a technical observation that is likely known, but for which we are unaware of a reference.  

\begin{lemma}\label{lem1}
For an irreducible root system of rank $\ell$, there are at most $\ell$ positive roots of the same height.
\end{lemma}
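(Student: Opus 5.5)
We want to show that for each $k\ge 1$ the set $\Phi^+_k=\{\beta\in\Phi^+ : \textrm{ht}(\beta)=k\}$ has at most $\ell$ elements. The plan is to build an injection from $\Phi^+_{k+1}$ into $\Phi^+_k$ for every $k\ge 1$. Then $|\Phi^+_k|$ is non-increasing in $k$, and since $|\Phi^+_1|=|\Pi|=\ell$, the lemma follows.

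\textbf{Step 1 (a down-map).} For each positive root $\beta$ of height at least $2$ there is a simple root $\alpha_i$ with $\beta-\alpha_i\in\Phi^+$. This is standard: $(\beta,\beta)>0$ forces $(\beta,\alpha_i)>0$ for some $i$, and then $\beta-\alpha_i$ is a root, necessarily positive because $\beta\neq\alpha_i$. So every root in $\Phi^+_{k+1}$ lies above some root in $\Phi^+_k$. This gives surjectivity in the wrong direction, and by itself it does not bound $|\Phi^+_{k+1}|$.

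\textbf{Step 2 (the injection, which is the main obstacle).} What we actually need is a matching of $\Phi^+_{k+1}$ into $\Phi^+_k$ in the bipartite graph whose edges join $\beta$ to $\beta-\alpha_i$. There are two routes.

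The first route is Kostant's theorem: the numbers $|\Phi^+_k|$ form the partition dual to the exponents $m_1\le\cdots\le m_\ell$. Concretely, $|\Phi^+_k|=\#\{j : m_j\ge k\}$, which is visibly non-increasing and at most $\ell$. This is a known result (Kostant; Macdonald), so one option is simply to cite it.

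For a self-contained argument, the second route is Hall's marriage condition via an $\mathfrak{sl}_2$-argument. Take the principal $\mathfrak{sl}_2$ in the complex Lie algebra $\mathfrak g$, with $e=\sum_i e_{\alpha_i}$ and $h=2\rho^\vee$. The root space $\mathfrak g_\beta$ has $h$-eigenvalue $2\,\textrm{ht}(\beta)$. By $\mathfrak{sl}_2$ representation theory, $\mathrm{ad}\, e$ is injective from the $h$-eigenspace of eigenvalue $2j$ to that of eigenvalue $2j+2$ whenever $j\ge 0$. Apply this to $\mathfrak n^-$ together with $\mathfrak h$, with the degrees indexed by $-k$. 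This shows $\mathrm{ad}\, e:\mathfrak g_{-(k+1)}\to\mathfrak g_{-k}$ is injective for $k\ge 1$, where $\mathfrak g_{-k}$ denotes the span of the negative root spaces of height $-k$. Hence $|\Phi^+_{k+1}|\le|\Phi^+_k|$. This step works in characteristic $0$, which is legitimate because the statement is purely about root systems.

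\textbf{Step 3 (conclusion).} Chaining the inequalities gives $|\Phi^+_k|\le|\Phi^+_1|=\ell$ for all $k$. I expect Step 2 to need the most care, specifically the injectivity of $\mathrm{ad}\, e$ on negative degrees. The fallback if that step resists is to cite Kostant's theorem, or to verify the claim case by case from the tables of positive roots for the types $A$–$G$.
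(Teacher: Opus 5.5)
Your proof is correct in substance but takes a different route from the paper. The paper argues case by case: it lists the positive roots of types $A_\ell$, $B_\ell$, $C_\ell$, $D_\ell$ in the $\epsilon$-basis, counts the roots of each height directly, and asserts that the exceptional types can be checked by hand. You instead prove the uniform monotonicity $|\Phi^+_{k+1}|\le|\Phi^+_k|$ for $k\ge 1$ and start the chain at $|\Phi^+_1|=|\Pi|=\ell$. For the monotonicity you either cite Kostant's theorem, or use the principal $\mathfrak{sl}_2$-triple $e=\sum_i e_{\alpha_i}$, $h=2\rho^\vee$, $f=\sum_i c_i f_{\alpha_i}$ with $2\rho^\vee=\sum_i c_i\alpha_i^\vee$. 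Your approach needs no classification, handles $E_6,E_7,E_8,F_4,G_2$ without table-checking, and gives more than the lemma: the height distribution is non-increasing, and with Kostant's theorem it is exactly the partition dual to the exponents. The cost is heavier input, namely the existence of a complex simple Lie algebra with root system $\Phi$ and $\mathfrak{sl}_2$-representation theory. The paper uses only explicit root lists.

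One statement in Step 2 is false as written and must be corrected. Write $\mathfrak g_j$ for the $2j$-eigenspace of $\mathrm{ad}\,h$; for $j\neq 0$ it is spanned by the root spaces of height $j$. It is not true that $\mathrm{ad}\,e:\mathfrak g_j\to\mathfrak g_{j+1}$ is injective for $j\ge 0$. In a finite-dimensional $\mathfrak{sl}_2$-module the kernel of $e$ is spanned by highest weight vectors, which have weight $\ge 0$. So $e$ is guaranteed injective only on weight spaces of negative weight. Concretely, $[e,e_\theta]=0$ for the highest root $\theta$. Also $\mathrm{ad}\,e:\mathfrak g_1\to\mathfrak g_2$ cannot be injective, since $\dim\mathfrak g_1=\ell>\ell-1=\dim\mathfrak g_2$. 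Separately, $\mathfrak h\oplus\mathfrak n^-$ is not stable under $\mathrm{ad}\,e$, so it is not an $\mathfrak{sl}_2$-module; apply the theory to all of $\mathfrak g$. With the correct statement, the conclusion you actually use holds: $\mathrm{ad}\,e:\mathfrak g_{-(k+1)}\to\mathfrak g_{-k}$ is injective because the eigenvalue $-2(k+1)$ is negative. Equivalently, $\mathrm{ad}\,f:\mathfrak g_{k+1}\to\mathfrak g_k$ is injective for $k\ge 0$, and the proof goes through. The Hall's-marriage framing is unnecessary, since you only use the dimension inequality.
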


\begin{proof}
We conduct a case-by-case argument. Here we mainly use the epsilon basis for each classical Lie algebra type as in \cite{Hum}.\\

\noindent\underline{\bf Type $A_\ell$:} The set of positive roots consists of all $\epsilon_i-\epsilon_j=\alpha_i+\cdots+\alpha_{j-1}$ for $1\le i<j\le \ell+1$.  We have ht$(\epsilon_i - \epsilon_j) = j-i$. For each $h$ between $1$ and $\ell$, the number of height $h$ roots is the same as the number of pairs $(i,j)$ such that $j-i=h$ and $1\le i<j\le \ell+1$. Since $j=h+i$, it suffices to count the number of $i$s such that $h+i\le \ell+1$. Hence, there are exactly $\ell+1-h$ such roots, which is obviously at most $\ell$. \\

\noindent\underline{\bf Type $B_\ell$:} The set of positive roots consists of the following:
\begin{itemize}
\item[(1)] $\{\epsilon_i:1\le i\le \ell\}$ where each $\epsilon_i=\alpha_i+\alpha_{i+1}+\cdots+\alpha_\ell$ with ht$(\epsilon_i)=\ell-i+1$, and those heights are distinct.
\item[(2)] $\{\epsilon_i-\epsilon_j:1\le i<j\le \ell\}$ where each is of height $j-i$. Similar to type $A$, the number of these roots of height $h$ is $\ell-h$.
\item[(3)] $\{\epsilon_i+\epsilon_j:1\le i<j\le \ell\}$ where ht$(\epsilon_i+\epsilon_j)=2\ell+2-(i+j)$, ranging from 3 to $2\ell-1$. Observe that for $3\le h\le \ell$, the number of these roots of height $h$ is the same as the number of roots of height $2\ell+2-h$. In particular, this number can be counted by all the numbers $i$ such that $i<h-i\le \ell$, simplifying to $1\le i\le\lfloor\frac{h-1}{2}\rfloor$; therefore, the number of such roots is $\lfloor\frac{h-1}{2}\rfloor$. For $\ell+1\le h\le 2\ell-1$, the same argument can be applied to obtain the number of height $h$ roots is $\lfloor\frac{2\ell+2-h}{2}\rfloor$.  
\end{itemize}
In summary, the number of height 1 (involving roots in (1) and (2)) is $1+(\ell-1)=\ell$. Similarly, the number of height 2 (also involving roots in (1) and (2)) is $1+(\ell-2)=\ell-1<\ell$. For $3\le h\le \ell$, positive roots of height $h$ includes (1), (2), and (3), hence the number of such roots is
\[ 1+\ell-h+\left\lfloor\frac{h-1}{2}\right\rfloor < 1 + \ell - h + (h-1) = \ell. \]
For $h\ge\ell+1$, height $h$ roots only occur in (3) and recall the number of such roots is $\lfloor\frac{2\ell+2-h}{2}\rfloor=\lfloor\ell+1-\frac{h}{2}\rfloor<\ell$.\\

\noindent\underline{\bf Type $C_\ell$:} The set of positive roots consists of the following:
\begin{itemize}
\item[(1)] $\{\epsilon_i-\epsilon_j:1\le i<j\le \ell\}$ where each is of height $j-i$. Similar to type $A$, the number of these roots of height $h$ is $\ell-h$.
\item[(2)] $\{\epsilon_i+\epsilon_j:1\le i<j\le \ell\}$ where ht$(\epsilon_i+\epsilon_j)=2\ell+1-(i+j)$, ranging from 2 to $2\ell-2$. Observe that, for $2\le h\le \ell$, the number of these roots of height $h$ is the same as the number of these roots of height $2\ell-h$. In particular, this number can be counted by all the $i$s such that $i<h+1-i\le \ell$ simplifying to $1\le i\le\lfloor\frac{h}{2}\rfloor$; therefore, the number of such roots is $\lfloor\frac{h}{2}\rfloor$. For $\ell+1\le h\le 2\ell-2$, the same argument can be applied to obtain the number of height $h$ roots is $\lfloor\frac{2\ell-h}{2}\rfloor$. 
\item[(3)] $\{2\epsilon_i:1\le i\le \ell\}$ where each is of height $2\ell+1-2i$.  Note that ht$(2\epsilon_i)$ is always odd and each is distinct, with values ranging from 1 to $2\ell-1$. 
\end{itemize}
In summary, there are $\ell$ roots of height 1 as it involves roots in (1) and (3). For $2\le h\le \ell$, positive roots of height $h$ includes (1), (2), and (3), hence the number of such roots is
\[ \ell-h+\left\lfloor\frac{h}{2}\right\rfloor+1\le \ell - h + (h-1) + 1 = \ell. \]
For $h\ge\ell+1$, height $h$ roots only occur in (2) and (3) (if $h$ is odd). Hence, the number of roots in this case is at most $\lfloor\frac{2\ell-h}{2}\rfloor+1<\ell$.\\

\noindent\underline{\bf Type $D_\ell$:} The set of positive roots consists of the following:
\begin{itemize}
\item[(1)] $\{\epsilon_i-\epsilon_j:1\le i<j\le \ell\}$ where each is of height $j-i$. Similar to type $A$, the number of these roots of height $h$ is $\ell-h$.
\item[(2)] $\{\epsilon_i+\epsilon_j:1\le i<j<\ell\}$ where ht$(\epsilon_i+\epsilon_j)=2\ell-(i+j)$, ranging from 3 to $2\ell-3$. Observe that for $3\le h\le \ell$, the number of these roots of height $h$ is the same as the number of roots of height $2\ell-h$. In particular, this number can be counted by all the $i$s such that $i<h-i\le \ell$ simplifying to $1\le i\le\lfloor\frac{h-1}{2}\rfloor$; therefore, the number of such roots is $\lfloor\frac{h-1}{2}\rfloor$. For $\ell+1\le h\le 2\ell-3$, the same argument can be applied to obtain the number of height $h$ roots is $\lfloor\frac{2\ell-h-1}{2}\rfloor$. 
\item[(3)] $\{\epsilon_i+\epsilon_\ell:1\le i<\ell\}$ where each is of height $\ell-i$ and their height ranges from 1 to $\ell-1$. It's easy to see that there is only one root of each such height. 
\end{itemize}
In summary, there are $\ell$ roots of height 1 and $\ell-1$ roots of height 2 (involving roots in (1) and (2)). For $3\le h\le \ell-1$, positive roots of height $h$ includes (1), (2), and (3), hence the number of such roots is
\[ \ell-h+\left\lfloor\frac{h-1}{2}\right\rfloor + 1 < \ell - h + (h-1) + 1 = \ell. \]
For height $\ell$, we only count roots in (1) and (2). Hence, the number of roots in this case is \[1 + \left\lfloor\frac{\ell-1}{2}\right\rfloor <\ell. \]
Now for $h\ge\ell+1$, only roots in (2) will be counted for this height and there are $\lfloor\frac{2\ell-h-1}{2}\rfloor$, which is less than $\ell$.\\

The claim for root systems of exceptional type can be manually verified.
\end{proof}


\section{Linear Bounds for the Cohomology}\label{S:Main}

This section presents our main results. We begin with bounding the $B$-cohomology $\opH^n(B,\la)$ with coefficients in any one-dimensional $B$-module.

\begin{theorem}\label{T:p>2}
Let $\lambda\in X(T)$ and $n\ge 1$.  For $p > 3$,  
\[ \dim\opH^n(B,\lambda)\le F(n(h_G-1))\cdot\left(\frac{h_G}{2}\right)^n\cdot(2\ell)^{n(h_G-1)}. \]
For the case when $p=3$, we have
\[ \dim\opH^n(B,\lambda)\le F(2n(h_G-1)-1)\cdot \left(\frac{h_G}{2}\right)^n\cdot(2\ell)^{n(h_G-1)}. \]

\end{theorem}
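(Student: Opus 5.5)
We plan to prove Theorem~\ref{T:p>2} by reducing the problem to counting weights. Because $U$ is normal in $B$ with $B/U\cong T$, and $T$ is diagonalizable, we have $\opH^n(B,\la)\cong (\opH^n(U,k)\otimes\la)^T$. So $\dim\opH^n(B,\la)$ is the dimension of the $(-\la)$-weight space of $\opH^n(U,k)$. We only need upper bounds, so it is enough to bound this weight space in any $T$-stable object of which $\opH^n(U,k)$ is a subquotient. All the spectral sequences we use are $T$-equivariant, so they respect weights.

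\emph{Step 1: a filtration by Frobenius twists.} We use the Lyndon--Hochschild--Serre spectral sequence for $U_1\lhd U$, with $U/U_1\cong U^{(1)}$:
\[E_2^{i,j}=\opH^i(U/U_1,\opH^j(U_1,k))\Rightarrow \opH^{i+j}(U,k).\]
Since $U$ is unipotent, $\opH^j(U_1,k)$ has a $U$-stable filtration with one-dimensional subquotients. Hence, weight by weight, $\opH^n(U,k)$ is a subquotient of $\bigoplus_{i+j=n}\opH^i(U,k)^{(1)}\otimes\opH^j(U_1,k)$.

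Next we apply the May-type spectral sequence for the restricted enveloping algebra. It has $E_1=\Lambda^\bullet\otimes (S^\bullet)^{(1)}$ in total degree $b+2a$ and converges to $\opH^\bullet(U_1,k)$. From it, $\opH^j(U_1,k)$ is a subquotient of $\bigoplus_{2a+b=j}\Lambda^b\otimes (S^a)^{(1)}$. We then iterate on $\opH^i(U,k)^{(1)}$. The conclusion is that every weight of $\opH^n(U,k)$ has the form $\sum_{r\ge0}p^r\mu_r$, where each $\mu_r$ is a weight of $\Lambda^{b_r}(\fraku^*)\otimes S^{a_r}(\fraku^*)$ (after regrouping Frobenius twists) and $\sum_r(2a_r+b_r)\le n$. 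The multiplicity of $-\la$ is then at most the number of such data, weighted by weight multiplicities.

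\emph{Step 2: counting via heights.} Each $\mu_r$ is, up to sign, a sum of at most $a_r+b_r$ positive roots, each of height at most $h_G-1$. Set $x_r=\textrm{ht}(\mu_r)$. Then $\sum_r x_rp^r=\textrm{ht}(\la)$ and $\sum_r x_r\le n(h_G-1)$. Proposition~\ref{BCNP} bounds the number of admissible height sequences $(x_r)$ by $F(n(h_G-1))$ for $p>3$, and by $F(2n(h_G-1)-1)$ for $p=3$. This is where the dependence on $p$ enters, and it is independent of $\la$.

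For a fixed sequence $(x_r)$, we bound the choices of roots as follows. Writing each $\mu_r$ as an ordered list of roots amounts to choosing a composition of $x_r$ into the heights of those roots, and there are at most $2^{x_r}$ such compositions. For each part we then choose a root of that height, and Lemma~\ref{lem1} allows at most $\ell$ choices. Since $\sum_r x_r\le n(h_G-1)$, this contributes at most $(2\ell)^{n(h_G-1)}$.

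The remaining factor $(h_G/2)^n$ must absorb the bookkeeping of how the degree $n$ splits among the pairs $(a_r,b_r)$ and among the twists $r$. Heuristically, the number of roots used is at most $n$, and $|\Phi^+|=\ell h_G/2$. So a careful charge of roughly $h_G/2$ per unit of degree should pay for placing each root in the exterior or the symmetric part and for the overcount coming from multiplicities.

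\emph{Main obstacle.} The delicate step is exactly this last accounting. It has to produce $(h_G/2)^n$ without double counting across the iterated spectral sequences, and without losing a $\la$-dependent factor in the number of nonzero twists $r$; that number must be controlled by the height constraint $\sum_r x_r\le n(h_G-1)$. Our first attempt will be to encode every contribution as a single word of at most $n(h_G-1)$ height-one steps decorated by root choices and degree markers, so that the three factors become independent counts.
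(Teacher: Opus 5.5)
Your reduction to counting weights is sound and leads to the same data as the paper's count on the $E_1$-page for $U_r$. The paper gets there via $\opH^n(B,\la)\cong\varprojlim_r\opH^n(U_r,\la)^T$. In your iterated Lyndon--Hochschild--Serre recursion, note that the $j=0$ term $\opH^n(U,k)^{(1)}$ does not lower the degree. So the iteration is well founded only because weights get divided by $p$ and $\opH^i(U,k)_0=\opH^i(B,k)=0$ for $i>0$. The height count via Proposition~\ref{BCNP} and the use of Lemma~\ref{lem1} also match the paper.

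The genuine gap is the factor $(h_G/2)^n$, which you leave as a heuristic. Without some such factor, your count is not an upper bound at all. An ordered list of roots at level $r$ does not record which roots lie in $\Lambda^{b_r}$ and which in $S^{a_r}$, and the degree constraint does not recover this. For instance, take $r,s\ge 1$ and compare a root $\alpha$ in $\Lambda^1$ at level $r$ plus a root $\beta$ in $S^1$ at level $s$ with $\alpha$ in $S^1$ plus $\beta$ in $\Lambda^1$: both give the same lists, the same weight and degree $3$. The count $|\Phi^+|=\ell h_G/2$ is not what pays for this. Nor is the number of occupied levels a $\la$-dependent danger: it is at most the number of roots, hence at most $n$.

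In the paper this is steps (ii)--(iii). For each nonzero $h_i$ one chooses the split $h_i=h_{i,1}+h_{i,2}$ of the height between the symmetric and exterior parts, giving $h_i+1$ choices. Since at most $n$ of the $h_i$ are nonzero and $\sum h_i\le n(h_G-1)$, AM--GM gives $\prod(h_i+1)\le h_G^n$. The frameworks are then counted by $P(h_{i,1})P(h_{i,2})\le 2^{h_i-1}$, and the $-1$ is the intended source of the $2^{-n}$. Strictly, it yields $2^{-1}$ only per \emph{nonzero} $h_i$, of which there may be fewer than $n$, so that step needs some care. Your $2^{x_r}$ bound for compositions discards exactly this saving.

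A clean way to finish in your own encoding is as follows.
\begin{itemize}
\item Also record the cut position $b_r\in\{0,\dots,N_r\}$, where $N_r$ is the number of roots at level $r$, so that $\sum_r N_r\le n$.
\item Bound the compositions of $x_r$ by $2^{x_r-1}$ and the root choices by $\ell^{N_r}$.
\item Use $(N_r+1)/2\le (h_G/2)^{N_r}$ for $N_r\ge 1$, which is valid when $h_G\ge 3$; in type $A_1$ the cut has at most two positions.
\end{itemize}
Each height sequence then carries at most $2^{\sum_r x_r}(h_G/2)^n\ell^n\le (h_G/2)^n(2\ell)^{n(h_G-1)}$ data. Proposition~\ref{BCNP} then gives the theorem for both $p>3$ and $p=3$.
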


\begin{proof}
By \cite[Theorem 7.1]{CPS}, for each $n\ge 1$ we have
\[ \opH^n(B,\lambda)\cong\varprojlim_r\opH^n(U_r,\lambda)^T. \]  
Hence, we will show that $\dim\opH^n(U_r,\lambda)^T$ is bounded by the number in the theorem for sufficiently large $r$.

Applying $T$-invariants to the spectral sequence in \cite[I.9.14]{Jan:2003}, we have the spectral sequence of $B$-modules:
\begin{equation}\label{U_rspectral}
E^{i,j}_1=\bigoplus \left(\lambda\otimes S^{a_1(1)}\otimes\cdots\otimes S^{a_r(r)}\otimes\Lambda^{b_1}\otimes\Lambda^{b_2(1)}\otimes\cdots\otimes\Lambda^{b_r(r-1)}\right)^{T}\Rightarrow\left(\opH^{i+j}(U_r,\lambda)\right)^{T}
\end{equation}
where the direct sum is taken over all $a_i$s and $b_j$s satisfying 
\[
\begin{cases}
 i+j  & =  2(a_1+\cdots+a_r)+b_1+\cdots+b_r, \\
 i & =  \sum_{n=1}^{r}(a_np^n+b_np^{n-1}).
\end{cases}
\]
It's easy to see that $\opH^n(B,\lambda)=0$ unless $\lambda$ is a non-negative sum of negative roots. Now assume $\lambda$ is in $\mathbb{N}\Phi^-$. It suffices to show that $\dim\bigoplus_{i+j=n}E^{i,j}_1$ has the stated bound.

Recall that $S^c$ (resp. $\Lambda^c$) is the $c$th symmetric (resp. exterior) power of $\fraku^*$. Hence, each weight of $S^{c}$ or $\Lambda^{c}$ is a sum of $c$ positive roots (necessarily distinct in the latter case). For any $r$, $\dim\opH^n(U_r,\lambda)^T$ is bounded by the number of root solutions to the system
\begin{align*}
\begin{cases}
\displaystyle{\beta(b_1)+\beta(a_1+b_2)p+\cdots+\beta(a_{r-1}+b_{r})p^{r-1} + \beta(a_r)p^r =-\lambda,}\\
\displaystyle{\sum_{i=0}^r(a_i + b_{i+1}) = \sum_{i=1}^r(a_i+b_i) \le \sum_{i = 1}^r(2a_i + b_i) = n},
\end{cases}
\end{align*}
where, for each $0\le i\le r$, $\beta(a_i+b_{i+1})$ is a sum of $(a_i+b_{i+1})$ positive roots with $a_0=b_{r+1}=0$. In other words, we are searching for the number of ways to choose at most $n$ positive roots to satisfy the first equation. Since $\lambda$ is fixed, the number of solutions stabilizes as $r$ increases.  And so it suffices to find a bound on this number for sufficiently large $r$. 

To simplify the estimation (at the expense of over counting), we estimate the number of root solutions such that in the first equation the heights of each side agree.  We break this estimation into the steps below.

\bigskip

(i) Consider the first equation and the height of the weight $-\lambda$. Set $h_i := \textrm{ht}(\beta(a_i+b_{i+1}))$.  Then we have
$$
h_0+h_1p+\cdots+h_rp^{r}=\textrm{ht}(-\lambda).
$$
In total, there are at most $n$ roots, and any positive root has height at most $h_G-1$. Therefore, the sum of the $h_i$ is at most $n(h_G - 1)$.  It follows that we are considering the number of integer solutions to
\begin{align*}
\begin{cases}
\displaystyle{h_0+h_1p+\cdots+h_rp^{r}=\textrm{ht}(-\lambda),}\\
\displaystyle{\sum_{i=0}^rh_i\le n(h_G-1).}
\end{cases}
\end{align*}
By Prop. \ref{BCNP}, this number is bounded by $F(n(h_G-1))$ if $p>3$ and by $F(2n(h_G-1)-1)$ if $p=3$.\\

\bigskip

(ii) Take a family of $\{h_i : 0 \leq i \leq r\}$ and consider a given (non-zero) $h_i$.   We first consider all 2-part decompositions of $h_i$ as $h_i = h_{i,1} + h_{i,2}$ for $0 \leq h_{i,j} \leq h_i$. This corresponds to separating the $\beta(a_i + b_{i+1})$ roots into a symmetric component $\beta(a_i)$ and exterior component $\beta(b_{i+1})$ with $h_{i,1} = \text{ht}(\beta(a_i))$ and $h_{i,2} = \text{ht}(\beta(b_{i+1}))$. Clearly there are $h_i + 1$ such decompositions and over all $i$ we have $\prod_{i=0}^r(h_i+1)=\prod_{h_i \neq 0}(h_i+1)$. We can bound this product as follows. Note that the number of non-zero $h_i$s can't exceed $n$, so we can reindex $\{h_i~|~0 \leq i \leq r\}$ as $\{\tilde{h}_j~|~ 1 \leq j \leq r + 1\}$, where $\tilde{h}_j = 0$ if $j > n$. Then we apply the arithmetic-geometric mean inequality to have 

\begin{align*}
\prod_{h_i \neq 0}(h_i+1)=\prod_{j=1}^n(\tilde{h}_j+1)&\le \left(\frac{\sum_{j=1}^n(\tilde{h}_{j}+1)}{n}\right)^n\\
&=\left(\frac{\sum_{j=1}^n\tilde{h}_{j}+n}{n}\right)^n\\
&\le \left(\frac{nh_G}{n}\right)^n = h_G^n.
\end{align*}
\bigskip

(iii)  For a given decomposition $h_i = h_{i,1} + h_{i,2}$, we now need to consider in how many ways roots may be chosen to give the specified heights.   For a given $h_{i,j}$, we split this into two steps: (a) identifying the number of ``root frameworks'' allowed and (b) counting the number of roots that can fit into that framework. For example, suppose that $h_{i,1} = 3$.   Then $\beta(a_i)$ is a sum of positive roots with total height of 3.   This could be obtained by taking a single root of height 3, a root of height two and another of height 1, or three roots each of height 1.  We refer to each of these options as a ``root framework.''  Observe that a root framework for $h_{i,j}$ is essentially just a partition of $h_{i,j}$.  E.g., the partitions of 3: (3), (2,1), and (1,1,1).   For a positive integer $x$, let $P(x)$ denote the number of partitions of $x$ and recall that $P(x) \leq 2^{x-1}$.  Set $P(0) = 1$.  Therefore, for a given pair $h_{i,1}$ and $h_{i,2}$, the total number of root frameworks is 
$$
P(h_{i,1})\cdot P(h_{i,2}) \leq 2^{h_{i,1} + h_{i,2} - 1} = 2^{h_i - 1},
$$
noting that in many cases one could say ``$-2$'' rather than ``$-1$''. Overall, reindexing as we did in the end of (ii), we obtain
\[ \prod_{i=0}^rP(h_{i,1})\cdot P(h_{i,2})=\prod_{h_i \neq 0}2^{h_i-1}=\prod_{j=1}^n2^{\tilde{h}_{j}-1}=2^{\sum_{j=1}^n\tilde{h}_{j}-n}\le \frac{2^{n(h_G-1)}}{2^n}. \]

\bigskip

(iv) For each root framework of height $h_i=h_{i,1} + h_{i,2}$ in (iii), we now need to consider the number of ways we can choose roots for that framework.  E.g., in how many ways can we choose a root of height 3 or in how many ways can we choose one of height two and one of height one or three of height one.   As observed in Lemma \ref{lem1}, there are at most $\ell$ ways to choose a root of a given height.   In our running example, this would give us at most $\ell$ ways to choose a root of height 3, $\ell^2$ ways to choose a pair of roots (one of height 2 and one of height 1), and $\ell^3$ ways to choose 3 roots of height one.  In general, for a given $h_{i,j}$, the worst case is always $\ell^{h_{i,j}}$ options.  Thus, for a given decomposition $h_i = h_{i,1} + h_{i,2}$ and specific choice of root framework from (ii), there are at most $\ell^{h_{i,1}}\ell^{h_{i,2}}=\ell^{h_i}$ ways to fill in that root framework. Overall, for each solution $\{h_i : 0 \leq i \leq r\}$ in step (i), we have 
\[ \prod_{i=0}^r\ell^{h_i}=\ell^{\sum_{i=0}^rh_i}\le\ell^{n(h_G-1)}. \] 
Note that the exterior algebra adds another layer of over estimation here, since our estimate allows one to repeat roots of a specified height, but that would not be allowed in the exterior component.

\bigskip
Finally, combining all the steps gives the bound for the cohomology dimension as stated.

\end{proof}

For the case when $p=2$, in place of \eqref{U_rspectral}, we would use the spectral sequence
\begin{equation*}
E^{i,j}_1=\bigoplus\left( \lambda\otimes S^{a_1}\otimes\cdots\otimes S^{a_r(r-1)}\right)^{T}\Rightarrow\left(\opH^{i+j}(U_r,\lambda)\right)^{T},
\end{equation*}
where the direct sum is taken over all $a_i$s satisfying 
\begin{equation*}
\left\{ \begin{array}{rcl} i+j  & = & a_1+\cdots+a_r, \\
 i & = & \sum_{n=1}^{r}a_np^{n-1}. \end{array}\right.
\end{equation*}
Then the system in our proof would become
\begin{align*}
\begin{cases}
\displaystyle{\beta(a_1)+\beta(a_2)p+\cdots+\beta(a_{r})p^{r-1} =-\lambda,}\\
\displaystyle{\sum_{i=1}^ra_i= n}.
\end{cases}
\end{align*}
Now the same argument as above may be applied to this case, but it is slightly simpler as we do not need to consider the step (ii).  Consequently, we obtain the following.

\begin{theorem}\label{T:p=2}
Assume $p = 2$. For any $\lambda\in X(T)$, we have for all $n\ge 1$ 
\[ \dim\opH^n(B,\lambda)\le F(2n(h_G-1)-1)\cdot 2^{n(h_G - 2)}\ell^{n(h_G-1)}. \]
\end{theorem}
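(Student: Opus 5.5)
We would follow the proof of Theorem \ref{T:p>2} step by step, using the $p=2$ spectral sequence displayed just before the statement in place of \eqref{U_rspectral}. First invoke \cite[Theorem 7.1]{CPS} to write $\opH^n(B,\lambda)\cong\varprojlim_r\opH^n(U_r,\lambda)^T$. It then suffices to bound $\dim\bigoplus_{i+j=n}E_1^{i,j}$ uniformly in $r$ for $r$ large. As before, the $T$-invariants vanish unless $\lambda\in\mathbb{N}\Phi^-$. Since each weight of $S^{a}$ is a sum of $a$ positive roots, the dimension is bounded by the number of root solutions to $\beta(a_1)+\beta(a_2)2+\cdots+\beta(a_r)2^{r-1}=-\lambda$ with $\sum a_i=n$. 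We overcount by requiring only that heights agree.

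\emph{Step (i).} Set $h_i=\textrm{ht}(\beta(a_i))$. Then $\sum_i h_i2^{i-1}=\textrm{ht}(-\lambda)$. Since there are $n$ roots, each of height at most $h_G-1$, we get $\sum h_i\le n(h_G-1)$. Proposition \ref{BCNP} with $p=2$ bounds the number of such height vectors by $F(2n(h_G-1)-1)$.

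\emph{Step (ii).} This step is unnecessary: there is no exterior factor, so no symmetric/exterior splitting of $h_i$ occurs and the factor $(h_G/2)^n$ disappears.

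\emph{Step (iii).} Fix a height vector. The root frameworks for $h_i$ are the partitions of $h_i$, and there are $P(h_i)\le 2^{h_i-1}$ of them when $h_i>0$. Note that here $P(h_i)$ is a single factor per index rather than $P(h_{i,1})P(h_{i,2})$. At most $n$ indices have $h_i\ne0$, since $\sum a_i=n$ forces at most $n$ nonzero $a_i$. Hence the number of framework choices is at most $2^{\sum h_i-m}$, where $m$ is the number of nonzero $h_i$.

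\emph{Step (iv).} By Lemma \ref{lem1}, each framework of total height $h_i$ can be filled in at most $\ell^{h_i}$ ways. The total over all indices is therefore at most $\ell^{\sum h_i}\le\ell^{n(h_G-1)}$.

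The one point needing care is the framework exponent in Step (iii): we must obtain $2^{n(h_G-2)}$ rather than $2^{n(h_G-1)-m}$ with $m$ possibly less than $n$. We would handle it as follows. Each nonzero $h_i$ comes from $a_i\ge1$ roots, so $h_i\le a_i(h_G-1)$. Therefore
\[\sum_{h_i\ne0}(h_i-1)\le\sum_i\bigl(a_i(h_G-1)-1\bigr)\le n(h_G-1)-m.\]
This alone does not give $n(h_G-2)$ when $m<n$. The fix is that the framework must consist of exactly $a_i$ parts, each at most $h_G-1$, and the number of such partitions is well below $2^{h_i-1}$. Concretely, count compositions of $h_i$ into $a_i$ parts, each at most $h_G-1$: there are at most $\binom{h_i-1}{a_i-1}$ of them. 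Bounding this binomial by $2^{a_i(h_G-2)}$ gives the factor $\prod_i 2^{a_i(h_G-2)}=2^{n(h_G-2)}$.

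Multiplying Steps (i), (iii) and (iv) yields
\[F(2n(h_G-1)-1)\cdot 2^{n(h_G-2)}\ell^{n(h_G-1)}.\]
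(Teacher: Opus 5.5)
Your plan follows the paper's own: rerun steps (i), (iii) and (iv) of the proof of Theorem~\ref{T:p>2} with the $p=2$ spectral sequence, and omit step (ii). You are also right that the estimate $P(h_i)\le 2^{h_i-1}$ only yields $2^{\sum h_i-m}\le 2^{n(h_G-1)-m}$ when just $m<n$ of the $h_i$ are nonzero. The paper's step (iii) glosses over this, because its reindexing $\prod_{h_i\neq0}2^{h_i-1}=\prod_{j=1}^n2^{\tilde h_j-1}$ tacitly assumes $m=n$. However, your repair has a genuine gap, in two places.

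\emph{The binomial bound is false.} The inequality $\binom{h_i-1}{a_i-1}\le 2^{a_i(h_G-2)}$ fails for small Coxeter numbers:
\begin{itemize}
\item In type $A_2$ ($h_G=3$), take $a_i=3$ and $h_i=6$ (three copies of the highest root). The inequality says $10\le 8$.
\item For $h_G=4$ it fails at $a_i=6$, $h_i=18$, since $\binom{17}{5}=6188>4^6$.
\end{itemize}
The binomial coefficient discards the cap $h_G-1$ on the parts, and that cap is exactly what keeps the count small.

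\emph{The $a_i$ are not fixed.} More seriously, step (i) fixes only the height vector, not the $a_i$. For a given $(h_i)$, several $(a_i)$ can be compatible: any with $\sum a_i=n$ and $a_i\le h_i\le a_i(h_G-1)$, up to $\binom{n-1}{m-1}$ of them. So a bound of $2^{n(h_G-2)}$ for each fixed $(a_i)$ does not give $2^{n(h_G-2)}$ frameworks per height vector. You pick up an extra summation factor that your final product does not contain.

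\emph{How to fix it.} Both defects disappear if you bound all frameworks for a fixed height vector at once.
\begin{enumerate}
\item Replace each partition of $h_i$ by a composition; this only overcounts.
\item Concatenate the compositions for the nonzero $h_i$ in increasing order of $i$. This gives a sequence of exactly $n$ integers in $\{1,\dots,h_G-1\}$ with total $\sum_i h_i$.
\item Because the $h_i$ are fixed, the cut points are recovered from the partial sums, so this assignment is injective.
\item The first $n-1$ entries determine the last, so there are at most $(h_G-1)^{n-1}$ frameworks. Using $x\le 2^{x-1}$ for $x\ge1$, this is at most $2^{(n-1)(h_G-2)}\le 2^{n(h_G-2)}$.
\item Each framework has $n$ parts in total, so by Lemma~\ref{lem1} it can be filled in at most $\ell^n\le\ell^{n(h_G-1)}$ ways.
\end{enumerate}
Multiplying by the $F(2n(h_G-1)-1)$ height vectors from step (i) gives the stated bound, independent of how the $n$ roots are distributed among the indices $i$.
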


\begin{remark}
As noted in the proof, our upper bounds in Theorems \ref{T:p>2} and \ref{T:p=2} are very far from tight, as we significantly over counted possible solutions formed in steps (iii) and (iv) with our exponential estimates. We also ignored various items:  whether a root framework in step (iii) actually is possible, whether the number of parts $h_{i,1}+h_{i,2}$ in each root framework from step (ii) must add up to $n$, and whether all potential $\beta(a_i+b_{i+1})$s satisfy the equation
\[ \beta(b_1)+\beta(a_1+b_2)p+\cdots+\beta(a_{r-1}+b_{r})p^{r-1} + \beta(a_r)p^r =-\lambda.\]
Also, the fact that we are only looking at the $E_1$-page of a spectral sequence suggests further over counting. 
\end{remark}

For every $G$-module $M$, recall the fact that $\opH^n(G,M) \cong \opH^n(B,M)$. As a $B$-module, $M$ has precisely $\dim(M)$ weights or one-dimensional composition factors.  Inductively, using a long exact sequence in cohomology, from the above theorems, we finally establish a linear bound for $G$-cohomology in the following.

\begin{corollary}\label{C:main}
For any finite-dimensional rational $G$-module $M$, we have for all $n\ge 1$,
\[  \dim\opH^n(G, M) \le F(2n(h_G-1)-1)\cdot \left(\frac{h_G}{2}\right)^n\cdot(2\ell)^{n(h_G-1)}\cdot\dim(M). \]
\end{corollary}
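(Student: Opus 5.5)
We reduce to $B$-cohomology using $\opH^n(G,M)\cong\opH^n(B,M)$ (\cite[Corollary II.4.7]{Jan:2003}). We then argue by induction on $\dim M$ along a composition series of $M$ as a $B$-module.

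Set
\[ C(\Phi,n):=F(2n(h_G-1)-1)\cdot \left(\frac{h_G}{2}\right)^n\cdot(2\ell)^{n(h_G-1)}. \]
The first step is to check that $C(\Phi,n)$ dominates every bound in Theorems \ref{T:p>2} and \ref{T:p=2}. This gives a uniform estimate $\dim\opH^n(B,\la)\le C(\Phi,n)$ for every $\la\in X(T)$ and every prime $p$.

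For $p=3$ the bound of Theorem \ref{T:p>2} is exactly $C(\Phi,n)$. For $p>3$ it follows from monotonicity of the Fibonacci sequence, since $F(n(h_G-1))\le F(2n(h_G-1)-1)$ because $n(h_G-1)\le 2n(h_G-1)-1$. For $p=2$ we compare $2^{n(h_G-2)}\ell^{n(h_G-1)}$ with $(h_G/2)^n(2\ell)^{n(h_G-1)}=h_G^n\,2^{n(h_G-2)}\ell^{n(h_G-1)}$. The latter is larger by the factor $h_G^n\ge 1$, so again the bound is at most $C(\Phi,n)$. This comparison step is elementary. The only point to watch is that $h_G\ge 2$, so all the indices are positive.

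Next comes the induction. Since $B$ is solvable, with $U$ unipotent and $T$ a torus, every finite-dimensional rational $B$-module $M$ has a composition series whose factors are one-dimensional modules $\la_1,\dots,\la_m$ with $m=\dim M$. These are exactly the weights of $M$ counted with multiplicity.

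Choose a $B$-submodule $M'\subset M$ with $M/M'\cong\la$ one-dimensional. The long exact sequence in $B$-cohomology contains the piece
\[ \opH^n(B,M')\to\opH^n(B,M)\to\opH^n(B,\la). \]
Exactness at the middle term gives $\dim\opH^n(B,M)\le\dim\opH^n(B,M')+\dim\opH^n(B,\la)$. By the induction hypothesis (or directly, by summing over the composition factors) we obtain
\[ \dim\opH^n(B,M)\le\sum_{i=1}^{m}\dim\opH^n(B,\la_i)\le C(\Phi,n)\dim M. \]
This is the claim.

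The main obstacle is modest. One must make sure the one-dimensional estimate is truly uniform in $\la$, including weights that are not in $\mathbb{N}\Phi^-$. For those weights the cohomology vanishes, as noted in the proof of Theorem \ref{T:p>2}, so the bound holds trivially. The theorems are stated for all $\la\in X(T)$, so this is already covered. Beyond that, the only work is to verify subadditivity via the long exact sequence. This is valid for rational $B$-modules, since the functors $\opH^n(B,-)$ are the derived functors of the $B$-fixed points on the category of rational $B$-modules.
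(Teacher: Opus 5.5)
Your proposal is correct and follows the paper's own argument. You pass to $B$ via $\opH^n(G,M)\cong\opH^n(B,M)$, filter $M$ by its $\dim M$ one-dimensional $B$-composition factors, and apply subadditivity from the long exact sequence together with the uniform bounds of Theorems~\ref{T:p>2} and~\ref{T:p=2}; your explicit check that $F(2n(h_G-1)-1)\left(\frac{h_G}{2}\right)^n(2\ell)^{n(h_G-1)}$ dominates the $p>3$ and $p=2$ bounds (by monotonicity of $F$ and the extra factor $h_G^n$) only spells out what the paper leaves to the remark after the corollary.
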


\begin{remark} For $p > 3$, one may replace $F(2n(h_G-1) - 1)$ with $F(n(h_G-1))$, and, for $p = 2$, we may replace the term $\left(\frac{h_G}{2}\right)^n$ with $\frac{1}{2^n}$.  In general, additional over estimating has occurred here as well, since we have $H^n(B,\la) = 0$ for $\la \notin \mathbb{N}\Phi^{-}$.
\end{remark}

\begin{remark} For $n = 1$ and $p > 3$, our bound is 
\begin{equation}\label{E:n=1}
\dim\opH^1(G,M) \le F(h_g-1)\cdot \frac{h_G}{2}\cdot (2\ell)^{(h_G-1)}\cdot\dim{M}.
\end{equation}
As mentioned in the introduction a bound for irreducible modules was given in \cite{PaSt} that depends somewhat on $p$. For $p \geq h_G$, their bound could be naively translated into the bound
\begin{equation}\label{E:n=1-2}
\dim\opH^1(G,M) \le \frac{z^z - 1}{z - 1}\cdot\dim(M),
\end{equation}
where $z = \left\lfloor\frac{h_G^3}{6}\right\rfloor$.  For an elementary comparison, consider a group of type $B_3$ or $C_3$. Then $\ell = 3$, $h_G = 6$, and $z = 36$.   Then the bound from (\ref{E:n=1}) is 
$$
F(6 - 1)\cdot\frac{6}{2}\cdot(2\cdot 3)^{6 - 1}\dim(M) = 15\cdot 6^5\cdot \dim(M), 
$$
whereas (\ref{E:n=1-2}) becomes
$$
\frac{36^{36} - 1}{35}\cdot\dim(M),
$$
which is substantially larger. For higher rank groups, their bound would grow faster as it is of the exponential form $(\frac{h_G^3}{6})^{\frac{h_G^3}{6}}$ while ours is of the form $(2\ell)^{h_G}$. 
\end{remark}

\providecommand{\bysame}{\leavevmode\hbox to3em{\hrulefill}\thinspace}

\end{document}